\documentclass[12pt]{amsart}
\usepackage{fullpage}
\usepackage{amsmath,amsbsy,amsrefs,amssymb,hyperref}
\usepackage{amsmath}
\usepackage{amsthm}
\usepackage{amsbsy}
\usepackage{amssymb}
\usepackage{hyperref}
\usepackage{amsfonts}
\usepackage{color}

\usepackage{enumitem}
\numberwithin{equation}{section}
\newtheorem{theorem}{Theorem}[section]

\newtheorem{proposition}[theorem]{Proposition}
\newtheorem{corollary}[theorem]{Corollary}

\newtheorem*{conjecture*}{Conjecture}
\newtheorem*{question*}{Serre's Question}
\newtheorem{hypothesis}[theorem]{Hypothesis}
\theoremstyle{remark}
\newtheorem*{remark*}{Remark}

\makeatletter
\renewcommand{\pod}[1]{\mathchoice
  {\allowbreak \if@display \mkern 18mu\else \mkern 8mu\fi (#1)}
  {\allowbreak \if@display \mkern 18mu\else \mkern 8mu\fi (#1)}
  {\mkern4mu(#1)}
  {\mkern4mu(#1)}
}

\makeatletter
\newcommand{\Z}{\mathbb{Z}}

\newcommand{\Q}{\mathbb{Q}}
\newcommand{\F}{\mathbb{F}}
\newcommand{\N}{\mathbb{N}}

\newcommand{\p}{\mathfrak{p}}
\renewcommand{\a}{\mathfrak{a}}

\newcommand{\rk}{\textup{rk}}
\newcommand{\f}{\mathfrak{f}}

\begin{document}
  \title[A Short-Interval Bombieri-Vinogradov Theorem]
    {A Variant of the Bombieri-Vinogradov Theorem in Short Intervals With Applications}
\author{Jesse Thorner}

  \maketitle

  \begin{abstract}
We generalize the classical Bombieri-Vinogradov theorem to a short interval, non-abelian setting.  This leads to variants of the prime number theorem for short intervals where the primes lie in arithmetic progressions that are ``twisted'' by a splitting condition in a Galois extension $L/K$ of number fields.  Using this result in conjunction with recent work of Maynard, we prove that rational primes in short intervals with a given splitting condition in a Galois extension $L/\Q$ exhibit dense clusters in short intervals.  We explore several arithmetic applications related to questions of Serre regarding the nonvanishing Fourier coefficients of cuspidal modular forms, including finding dense clusters of fundamental discriminants $ d $ in short intervals for which the central values of $d$-quadratic twists of modular $L$-functions are non-vanishing.
  \end{abstract}

\section{Introduction and Statement of Results}
\label{sec:intro}

Let $\N$ denote the set of positive integers, and let $a,q\in\N$ satisfy $(a,q)=1$.  Define
\[
\psi(x;q,a)=\sum_{\substack{n\leq x \\ n\equiv a\pmod q}}\Lambda(n),
\]
where $\Lambda(n)$ is the von Mangoldt function.  The prime number theorem for arithmetic progressions tells us that if $q\leq (\log x)^D$ for any constant $D>0$, we have
\begin{equation}
\label{pnt-prog}
\psi(2x;q,a)-\psi(x;q,a)\sim\frac{x}{\varphi(q)},
\end{equation}
where $\varphi$ denotes Euler's totient function.  Understanding both the error term and the range of $q$ for (\ref{pnt-prog}) is important for a wide variety of arithmetic problems.  The Generalized Riemann Hypothesis (GRH) for Dirichlet $L$-functions implies that if $q\leq x^{1/2-o(1)}$, then
\begin{equation}
\label{1.2}
\psi(2x;q,a)-\psi(x;q,a)-\frac{x}{\varphi(q)}\ll\sqrt{x}(\log qx)^2.
\end{equation}
While this is beyond the reach of current methods, it is known that the mean value of (\ref{1.2}) is about as small as predicted by GRH when we average over moduli $q$.  More specifically, if $0\leq\theta<\frac{1}{2}$ is constant, Bombieri and Vinogradov proved that for any fixed $D>0$, we have
\begin{equation}
\label{BV}
\sum_{q\leq x^{\theta}}\max_{ (a,q)=1}\max_{N\leq x}\left|\psi(2N;q,a)-\psi(N;q,a)-\frac{N}{\varphi(q)}\right|\ll \frac{x}{(\log x)^D}.
\end{equation}

A more difficult problem asks for the distribution of primes in arithmetic progressions when the interval $[x,2x]$ is replaced with $[x,x+h]$, where $h\geq x^{1-\delta}$ for some $\delta>0$.  Using deep analytic properties of Dirichlet $L$-functions, one can produce a short interval analogue of the Bombieri-Vinogradov estimate \eqref{BV} of the form
\begin{equation}
\label{BV-short}
\sum_{q\leq x^\theta}\max_{(a,q)=1}\max_{y\leq h}\max_{\frac{1}{2}x\leq N\leq x}\left|\psi(N+y;q,a)-\psi(N;q,a)-\frac{y}{\varphi(q)}\right|\ll \frac{h}{(\log x)^D},
\end{equation}
where $D>0$, $\delta>0$, and $\theta>0$ are constants and $h\geq x^{1-\delta}$.  The Density Hypothesis for Dirichlet $L$-functions, which follows from GRH, predicts that \eqref{BV-short} holds when $0\leq\delta<\frac{1}{2}$ and $0\leq\theta<\frac{1}{2}-\delta$ \cite[Chapter 12]{Montgomery-topics}.  There has been much progress toward this conjectured estimate; see \cite{PPS1} and the sources contained therein.  Currently, the sharpest version of \eqref{BV-short} is due to Timofeev \cite{Timofeev}, who proved that \eqref{BV-short} holds when $0\leq\delta<\frac{5}{12}$ and
\[
0\leq\theta<\begin{cases}
\frac{1}{2}-\delta&\mbox{if $0\leq\delta<\frac{2}{5}$},\\
\frac{9}{20}-\delta&\mbox{if $\frac{2}{5}\leq\delta<\frac{5}{12}$}.
\end{cases}
\]

Some of these results have been extended to a Chebotarev setting.  Specifically, let $L/K$ be a Galois extension of number fields with Galois group $G$ and absolute discriminant $d_L$, let $a,q\in\N$ with $ (a,q)=1$, and let $\mathrm{N}=\mathrm{N}_{K/\Q}$ denote the absolute field norm of $K$.  For a prime ideal $\p$ of $K$ which is unramified in $L$, there corresponds a certain conjugacy class $C\subset G$ consisting of the set of Frobenius automorphisms attached to the prime ideals of $L$ which lie over $\p$.  We denote this conjugacy class by the Artin symbol $[\frac{L/K}{\p}]$.  

For a fixed conjugacy class $C$ and an integral ideal $\a$ of $K$, define
\[
\Lambda_C(\a)=\begin{cases}
\log\mathrm{N}\p&\mbox{if $\a=\p^m$ with $m\geq1$, $\p$ unramified in $L$, and {\small$\left[\frac{L/K}{\p}\right]^m=C$}},\\
0&\mbox{otherwise}
\end{cases}
\]
and
\[
\psi_{C}(x;q,a)=\sum_{\substack{\mathrm{N}\a\leq x \\ \mathrm{N}\a\equiv a\pmod q}}\Lambda_C(\a).
\]
The Chebotarev density theorem tells us that if $q\leq(\log x)^D$, then
\begin{equation}
\label{eqn:cdt}
\psi_{C}(2x;q,a)-\psi_{C}(x;q,a)\sim d(C;q,a)x
\end{equation}
for some density $d(C;q,a)\geq0$.  If $(q,d_L)=1$, then
\[
d(C;q,a)=\frac{|C|}{|G|}\frac{1}{\varphi(q)}.
\]
In the case of $q=1$, Balog and Ono \cite{BO} extended \eqref{eqn:cdt} to a short interval setting by proving that if we fix
\begin{equation}
\label{BO-short1}
0<\delta<\begin{cases}
1/[L:\Q]&\mbox{if $[L:\Q]\geq3$,}\\
3/8&\mbox{if $[L:\Q]=2$,}\\
5/12&\mbox{if $[L:\Q]=1$}
\end{cases}
\end{equation}
and choose $h\geq x^{1-\delta}$, then
\begin{equation}
\label{BO-short}
\psi_{C}(x+h;1,1)-\psi_{C}(x;1,1)\sim\frac{|C|}{|G|}h.
\end{equation}

Building on the work of M. Ram Murty and V. K. Murty \cite{MM-chebotarev}, M. Ram Murty and Petersen \cite{MP} proved that if  $H\subset G$ is the largest abelian subgroup of $G$ such that $H\cap C$ is nonempty, $E$ is the fixed field of $H$, and $0\leq\theta<1/\max\{[E:\Q]-2,2\}$ is fixed, then
\begin{equation}
\label{murty-bv}
\sideset{}{'}\sum_{q\leq x^\theta}\max_{ (a,q)=1}\max_{N\leq x}\left|\psi_{C}(2N;q,a)-\psi_{C}(N;q,a)-\frac{|C|}{|G|}\frac{N}{\varphi(q)}\right|\ll \frac{x}{(\log x)^D},
\end{equation}
where $\sum'$ denotes summing over moduli $q$ satisfying $(q,d_L)=1$.  This extends \eqref{BV} to a Chebotarev setting; in fact, \eqref{BV} is recovered when $L=\Q$.  Our main result is a Chebotarev analogue of \eqref{BV-short}, which we prove in Section \ref{sec:proof_main_thm}.

\begin{theorem}
\label{main-theorem}
Let $L/K$ be a Galois extension of number fields with Galois group $G$ and absolute discriminant $d_L$, and let $C\subset G$ be a fixed conjugacy class.  Let $H\subset G$ be the largest abelian subgroup of $G$ such that $H\cap C$ is nonempty, and let $E$ be the fixed field of $H$.  Fix $0\leq\delta<\frac{2}{5[E:\Q]}$ and $0\leq\theta<\frac{1}{3}(\frac{2}{5[E:\Q]}-\delta)$.  If $h\geq x^{1-\delta}$, then for any fixed $D>0$, we have
\[
\sideset{}{'}\sum_{q\leq x^{\theta}}\max_{ (a,q)=1}\max_{y\leq h}\max_{\frac{1}{2}x\leq N\leq x}\left|\psi_{C}(N+y;q,a)-\psi_{C}(N;q,a)-\frac{|C|}{|G|}\frac{y}{\varphi(q)}\right|\ll\frac{h}{(\log x)^D},
\]
where $\sum'$ denotes summing over moduli $q$ satisfying $(q,d_L)=1$.
\end{theorem}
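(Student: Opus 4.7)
My plan is to combine the abelian reduction of Murty and Petersen from \cite{MM-chebotarev, MP}, which underlies \eqref{murty-bv}, with a short-interval zero-density argument modeled on Timofeev \cite{Timofeev}.

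\textbf{Reduction to Hecke characters over $E$.} Because $H$ is abelian and meets $C$, the Murty--Petersen formalism (via Frobenius reciprocity and Artin reciprocity for the abelian extension $L/E$) expresses $\Lambda_C(\a)$ as a finite $\C$-linear combination of pulled-back products $\chi(\mathfrak{b})\Lambda_E(\mathfrak{b})$. Here $\mathfrak{b}$ runs over integral ideals of $\mathcal{O}_E$, $\chi$ is a ray class character of $E$ whose conductor divides that of $L/E$, and $\Lambda_E$ is the von Mangoldt function on $\mathcal{O}_E$. Expanding the congruence $\mathrm{N}_{E/\Q}\mathfrak{b}\equiv a\pmod q$ via Dirichlet orthogonality then reduces the theorem, for each fixed such $\chi$, to a bound of the form
\[
\sideset{}{'}\sum_{q\leq x^\theta}\frac{1}{\varphi(q)}\sum_{\psi\bmod q}\max_{y\leq h}\max_{\frac{x}{2}\leq N\leq x}\bigl|\Psi(\chi\psi;N+y)-\Psi(\chi\psi;N)-\textup{MT}\bigr|\ll\frac{h}{(\log x)^D},
\]
where $\Psi(\chi\psi;u) := \sum_{\mathrm{N}_{E/\Q}\mathfrak{b}\leq u}\chi(\mathfrak{b})\psi(\mathrm{N}_{E/\Q}\mathfrak{b})\Lambda_E(\mathfrak{b})$ and $\textup{MT}$ denotes the expected main term.

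\textbf{Explicit formula, zero density, and balancing.} To each twist $\chi\psi$ is attached a Hecke $L$-function $L(s,\chi\psi)$ of degree $[E:\Q]$ whose analytic conductor scales like $q^{[E:\Q]}d_L$. A truncated Perron formula applied to $-L'(s,\chi\psi)/L(s,\chi\psi)$ at height $T$ slightly larger than $x/h$ converts the inner expression above into a sum over nontrivial zeros $\rho=\beta+i\gamma$ of $L(s,\chi\psi)$ with $|\gamma|\leq T$, plus an acceptable tail error. The standard Hecke zero-free region disposes of zeros with $\beta<1-c/\log(q^{[E:\Q]}Td_L)$; the possible Siegel exception is controlled via the Deuring--Heilbronn phenomenon together with the hypothesis $(q,d_L)=1$. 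For the remaining zeros I would apply a log-free zero-density estimate for the family $\{L(s,\chi\psi) : q\leq Q,\,\psi\bmod q\}$ of Huxley--Jutila type with exponent $\tfrac{5}{2}[E:\Q]$, derived by pairing the large sieve for Hecke characters of $E$ with a Hal\'asz--Montgomery fourth-moment estimate. Taking $Q=x^\theta$, $T\asymp x^\delta$, and $h=x^{1-\delta}$, a dyadic decomposition over $\sigma$ shows that the resulting zero-sum is $\ll h/(\log x)^D$ precisely when $3\theta+\delta < 2/(5[E:\Q])$, matching the hypothesis.

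\textbf{Main obstacle.} The hardest step will be establishing the log-free zero-density estimate in the Hecke setting with the correct constant $\tfrac{5}{2}[E:\Q]$. Lifting Timofeev's Dirichlet-character argument to Hecke characters of $E$ requires a sufficiently uniform fourth-moment estimate for these $L$-functions on the critical line and a log-free handling of the exceptional zero (taking advantage of $(q,d_L)=1$); the unavoidable scaling of the analytic conductor by $q^{[E:\Q]}$ is what ultimately forces the factor $1/[E:\Q]$ in the admissible ranges of $\delta$ and $\theta$.
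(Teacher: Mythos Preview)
Your overall architecture---reduce to Hecke characters of $E$ via the Murty--Murty/Murty--Petersen formalism, apply an explicit formula, and control the resulting zero-sum with a density estimate of exponent $\tfrac{5}{2}n_E$---is exactly the paper's strategy, and your final balancing condition $3\theta+\delta<\tfrac{2}{5n_E}$ is correct. Two points of divergence are worth flagging.

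First, you identify as the \emph{main obstacle} the need to prove a \emph{log-free} zero-density estimate of Huxley--Jutila type for Hecke characters of $E$, and you pair this with Deuring--Heilbronn for the exceptional zero. The paper does neither: it simply quotes Hinz's density estimate $N(\sigma,R,T)\ll (R^2 T^{n_E})^{\frac{5}{2}(1-\sigma)}(\log QT)^{9n_E+10}$, which carries log powers, and then removes those logs by the standard Gallagher device. Namely, after a dyadic split in $Q_1$, Bartz's classical zero-free region $\sigma>1-b_L/\max\{\log Q_1,(\log x)^{3/4}\}$ forces $N(\sigma,Q_1^{n_E},T)\leq 1$ near $\sigma=1$; away from that region the factor $x^{-\epsilon\eta(Q_1,x)}$ annihilates any power of $\log x$ when $Q_1\leq\exp((\log x)^{3/4})$, and the prefactor $1/Q_1$ does so when $Q_1$ is larger. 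The single possible Siegel zero is then disposed of with an (ineffective) Siegel-type bound, not Deuring--Heilbronn. So the ``hard step'' you anticipate is entirely avoided; your route would also work in principle, but it requires genuinely new input (a log-free Hecke density theorem) that the paper does not need.

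Second, a small inconsistency: you write $T\asymp x^\delta$, but with that choice the Perron tail $\sum_{q\leq Q} x(\log x)^2/T\asymp Qh(\log x)^2$ is far too large, and the density estimate would yield the constraint $2\theta+\delta<\tfrac{2}{5n_E}$ rather than $3\theta+\delta$. The paper takes $T$ of order $Qx^\delta$ (up to log powers), which simultaneously makes the tail $\ll h(\log x)^{-D}$ and produces exactly your stated condition $3\theta+\delta<\tfrac{2}{5n_E}$. Since you do arrive at the correct final inequality, this is presumably a slip rather than a gap.
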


The following improvement on the range of $\delta$ \eqref{BO-short1} in Balog and Ono's short interval version of the Chebotarev density theorem \eqref{BO-short} follows immediately from Theorem \ref{main-theorem}.

\begin{corollary}
\label{thm:CDT_short}
Let $L/K$ be a Galois extension of number fields with Galois group $G$ and absolute discriminant $d_L$, and let $C\subset G$ be a fixed conjugacy class.  Suppose that $[L:\Q]\geq3$.  Let $H\subset G$ be the largest abelian subgroup of $G$ such that $H\cap C$ is nonempty, and let $E$ be the fixed field of $H$.  Suppose that $q\leq (\log x)^D$ satisfies $(q,d_L)=1$ and $(a,q)=1$.  If $0\leq\delta<\max\{\frac{1}{[L:\Q]},\frac{2}{5[E:\Q]}\}$ is fixed and $h\geq x^{1-\delta}$, then
\[
\psi_C(x+h;q,a)-\psi_C(x;q,a)\sim\frac{|C|}{|G|}\frac{h}{\varphi(q)}.
\]
\end{corollary}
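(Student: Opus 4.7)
The asymptotic is to be established in two ranges of $\delta$, which together give the stated maximum. For $0 \le \delta < 2/(5[E:\Q])$, the result will follow from Theorem~\ref{main-theorem} by specialization to a single modulus; for $0 \le \delta < 1/[L:\Q]$, it will follow from an extension of Balog and Ono's proof of \eqref{BO-short} from $q=1$ to moduli $q \le (\log x)^D$ with $(q,d_L)=1$.

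For the first range, fix any $\theta$ with $0 < \theta < \frac{1}{3}\bigl(\frac{2}{5[E:\Q]} - \delta\bigr)$. For all sufficiently large $x$ we have $q \le (\log x)^D \le x^\theta$, so the modulus $q$ is among those summed over on the left-hand side of Theorem~\ref{main-theorem}. Applying Theorem~\ref{main-theorem} with $D$ replaced by $D+1$ and retaining only the single nonnegative term corresponding to our $q$, $a$, $y = h$, and $N = x$, I obtain
\[
\psi_C(x+h; q, a) - \psi_C(x; q, a) = \frac{|C|}{|G|}\frac{h}{\varphi(q)} + O\!\left(\frac{h}{(\log x)^{D+1}}\right).
\]
Since $\varphi(q) \le q \le (\log x)^D$, the main term has size $\gg h/(\log x)^D$ and dominates the error, yielding the desired asymptotic.

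For the second range, I would adapt Balog and Ono's proof of \eqref{BO-short}. Their argument inserts the short-interval count into an explicit formula for $\psi_C$ and controls the resulting sum over nontrivial zeros via log-free zero-density estimates for $\zeta_L$. To accommodate the congruence $\mathrm{N}\a \equiv a \pmod{q}$, expand by Dirichlet characters $\chi \pmod{q}$ and reduce (using the hypothesis $(q, d_L) = 1$ and the standard Artin formalism) to the analogous estimates for the Hecke $L$-functions over $L$ attached to the characters $\chi \circ \mathrm{N}_{K/\Q}$. Since $q \le (\log x)^D$, the conductors of these $L$-functions grow polylogarithmically in $x$, so every conductor-dependent loss is absorbed into the $(\log x)^{-D}$ error for any fixed $D$.

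The principal technical point is verifying the uniformity of the log-free zero-density estimates across the finite family of Hecke characters appearing in the second step, but this is a routine modification of the estimates already underlying the proof of Theorem~\ref{main-theorem}. The first step, passing from the averaged inequality of Theorem~\ref{main-theorem} to a pointwise asymptotic, is immediate precisely because $q$ is so small that the main term comfortably exceeds $h/(\log x)^D$.
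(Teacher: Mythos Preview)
Your proposal is correct and matches the paper's approach: the paper simply asserts that the corollary ``follows immediately from Theorem~\ref{main-theorem},'' meaning that the new range $\delta<2/(5[E:\Q])$ comes from specializing Theorem~\ref{main-theorem} to a single small modulus exactly as you do, while the range $\delta<1/[L:\Q]$ is inherited from Balog--Ono's argument \eqref{BO-short} (whose explicit-formula framework, as cited in Section~\ref{sec:proof_main_thm}, already accommodates moduli $q\le(\log x)^D$ with $(q,d_L)=1$). Your write-up is more explicit than the paper about the second range, which is appropriate since the paper leaves that extension implicit.
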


Much like the results of \cite{MM-chebotarev,MP}, nonabelian analogues of the Bombieri-Vinogradov theorem in short intervals can have interesting arithmetic consequences.  In this paper, we will focus on consequences related to recent advances toward the Hardy-Littlewood prime $k$-tuples conjecture.  For these applications, we consider a Galois extension $L/\Q$ with Galois group $G$ and absolute discriminant $d_L$, and we consider a fixed conjugacy class $C\subset G$.  In this setting, a Chebotarev set takes the form
\begin{equation}
\label{cheb-set}
\mathcal{P}=\left\{p:p\nmid d_L,\left[\frac{L/\Q}{p}\right]=C\right\}
\end{equation}

We establish some additional notation.  Let $\mathbb{P}$ denote the set of all primes, and let $h_i$ denote a nonnegative integer.  We call a collection of linear forms $\mathcal{H}_k=\{n+h_1,\ldots,n+h_k\}$ {\it admissible} $\prod_{i=1}^k (n+h_i)$ has no fixed prime divisor.  (We could consider more general admissible sets $\{a_1 n+b_1,\ldots,a_k n+b_k\}$, but this sometimes hinders the applications we consider.)

\begin{conjecture*}[Hardy-Littlewood]
If $\mathcal{H}_k$ is admissible, then as $x\to\infty$, we have
\[
\#\{n\in[x,2x]:\#(\{n+h_1,\ldots,n+h_k\}\cap\mathbb{P})=k\}\sim \mathfrak{S}\frac{x}{(\log x)^k},
\]
where $\mathfrak{S}$ is a certain positive constant depending on $\mathcal{H}_k$.

\end{conjecture*}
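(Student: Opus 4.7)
The Hardy-Littlewood prime $k$-tuples conjecture is one of the central open problems in analytic number theory, and producing the claimed asymptotic for all admissible $\mathcal{H}_k$ lies well beyond current methods. My plan is therefore not to give a complete proof, but to sketch the sieve-theoretic framework in which the statement sits, describe the partial progress that is accessible, and indicate where every known technique breaks down.

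The natural starting point is to write
\[
\#\{n\in[x,2x]:\#(\mathcal{H}_k(n)\cap\mathbb{P})=k\}=\sum_{x<n\leq 2x}\prod_{i=1}^{k}\mathbf{1}_{\mathbb{P}}(n+h_i)
\]
and detect primality by excluding small prime factors. A Selberg upper bound sieve, together with level-of-distribution input of the strength of the Bombieri-Vinogradov theorem \eqref{BV}, produces an upper bound of the predicted order $\mathfrak{S} x/(\log x)^k$, where the singular series $\mathfrak{S}=\prod_p(1-\nu_{\mathcal{H}_k}(p)/p)(1-1/p)^{-k}$ arises from local densities $\nu_{\mathcal{H}_k}(p)=\#\{h_i\bmod p\}$; the admissibility hypothesis forces $\nu_{\mathcal{H}_k}(p)<p$ for every $p$, and hence $\mathfrak{S}>0$. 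Toward a matching lower bound, the state of the art is the Maynard-Tao sieve: one constructs nonnegative weights of the form
\[
w_n=\Bigl(\sum_{d_i\mid n+h_i\text{ for all }i}\lambda_{d_1,\ldots,d_k}\Bigr)^2
\]
with $\lambda$ chosen to optimize a ratio of quadratic forms on the standard simplex, and shows that
\[
\sum_{n}w_n\sum_{i=1}^{k}\mathbf{1}_{\mathbb{P}}(n+h_i)\geq m\sum_{n}w_n
\]
once $k\geq k(m)$. This yields a positive-density set of $n$ for which at least $m$ of the $n+h_i$ are prime, a result strictly weaker than the conjectured asymptotic.

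The main obstacle to the full conjecture is Selberg's \emph{parity problem}: sieve methods cannot distinguish integers having an even from an odd number of prime factors, and therefore in principle cannot isolate the event that all $k$ linear forms simultaneously take prime values with the correct constant. This is why the statement is labeled a conjecture rather than a theorem, and why the subsequent arithmetic applications in this paper combine Theorem \ref{main-theorem} with the Maynard-Tao machinery to extract clusters of primes in Chebotarev classes, rather than relying on the full Hardy-Littlewood asymptotic.
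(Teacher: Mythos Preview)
Your assessment is correct: the statement is explicitly labeled a \emph{conjecture} in the paper, and the paper offers no proof of it whatsoever. It is stated purely as background and motivation for the weaker Maynard--Tao type results that follow, so there is nothing to compare against. Your sketch of the upper-bound sieve, the Maynard--Tao lower-bound mechanism, and the parity obstruction is an accurate summary of why the full asymptotic remains out of reach, and it correctly situates the paper's actual contributions (Theorem \ref{main-theorem} and Theorem \ref{bounded-gaps-short}) as unconditional partial progress rather than as steps toward the conjecture itself.
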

\noindent
Choosing $\mathcal{H}_2=\{0,2\}$, the Hardy-Littlewood conjecture implies the elusive twin prime conjecture, that there are infinitely many pairs of primes whose difference is 2.

In \cite{maynard}, Maynard developed a significant improvement to the Selberg sieve.  By using this improvement in conjunction with \eqref{BV}, Maynard proved that if $\mathcal{H}_{k}$ is admissible, then there are infinitely many integers $N>0$ such that for some $n\in[N,2N]$, we have
\[
\#(\{n+h_1,\ldots,n+h_k\}\cap\mathbb{P})\geq (1/4+o_{k\to\infty}(1))\log k.
\]
(Tao independently derived the same improvement as Maynard at roughly the same time, but arrived at slightly different conclusions.)  Using \eqref{murty-bv} and Maynard's improvement to the Selberg sieve, the author \cite{JT} proved that if $\mathcal{H}_{k}$ is admissible, then there are infinitely many integers $N>0$ such that for some $n\in[N,2N]$, we have
\[
\#(\{n+h_1,\ldots,n+h_k\}\cap\mathcal{P})\geq \left(\frac{1}{2}\min\left\{\frac{1}{2},\frac{2}{|G|}\right\}\frac{|C|}{|G|}\frac{\varphi(d_L)}{d_L}+o_{k\to\infty}(1)\right)\log k,
\]
where $\mathcal{P}$ is a Chebotarev set given by \eqref{cheb-set}.  The author explored applications of this result to ranks of quadratic twists of elliptic curves, congruence conditions on the Fourier coefficients of newforms, and representations of primes by binary quadratic forms.

In \cite{maynard2}, Maynard generalized his methods to prove weak forms of the Hardy-Littlewood conjecture with specializations to primes in short intervals and primes in Chebotarev sets.  More specifically, given $0\leq\delta<\frac{5}{12}$ and $h\geq x^{1-\delta}$, Maynard proved that there exists an absolute constant $C>0$ such that if $k\geq C$ and $\mathcal{H}_k$ is an admissible set, then
\begin{equation}
\label{thing-1}
\#\{n\in[x,x+h]:\#(\{n+h_1,\ldots,n+h_k\}\cap\mathbb{P})\geq C^{-1}\log k\}\gg\frac{h}{(\log x)^k}
\end{equation}
Furthermore, if $\mathcal{P}$ is given by \eqref{cheb-set}, then Maynard also proved that there exists a constant $C_L>0$ such that if $k\geq C_L$ and $\mathcal{H}_{k}$ is admissible, then
\begin{equation}
\label{thing-2}
\#\{n\in[x,2x]:\#(\{n+h_1,\ldots,n+h_k\}\cap\mathcal{P})\geq C_L^{-1}\log k\}\gg\frac{x}{(\log x)^{k}}.
\end{equation}
(The subscript $L$ in $C_L$ denotes that the constant $C$ depends only on $L$ in an effectively computable fashion.  We will use this convention henceforth.)

Using Theorem \ref{main-theorem}, we prove in Section \ref{sec:proofs2} the following mutual refinement of \eqref{thing-1} and \eqref{thing-2}, which extends the author's applications in \cite{JT} to a short interval setting.

\begin{theorem}
\label{bounded-gaps-short}
Let $L/\Q$ be a Galois extension of number fields, let $\mathcal{P}$ be as in \eqref{cheb-set}, and choose $h$ as in Theorem \ref{main-theorem}.  There exists a constant $C_L\in\N$ such that if $k\geq C_L$ and $\mathcal{H}_{k}$ is admissible, then
\[
\#\{n\in[x,x+h]:\#(\{n+h_1,\ldots,n+h_k\}\cap\mathcal{P})\geq C_L^{-1}\log k\}\gg \frac{h}{(\log x)^{k}}.
\]
\end{theorem}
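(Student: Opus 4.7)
The plan is to combine Maynard's enhanced Selberg sieve from \cite{maynard,maynard2} with Theorem \ref{main-theorem} as the level-of-distribution input, paralleling how \cite{JT} combined Maynard's original sieve \cite{maynard} with the Murty--Petersen estimate \eqref{murty-bv}, and how Maynard himself deduced \eqref{thing-1} and \eqref{thing-2} from the short-interval and Chebotarev variants of Bombieri--Vinogradov, respectively. The proof fuses all three threads.

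First I would apply the standard $W$-trick: set $D_0=\log\log\log x$, $W=d_L\prod_{p\leq D_0}p$, and fix $v_0\pmod W$ with $(v_0+h_i,W)=1$ for each $i$, which is possible by admissibility of $\mathcal{H}_k$ once $x$ is large. For a smooth $F$ supported on the simplex $\{(t_1,\ldots,t_k)\in[0,1]^k : t_1+\cdots+t_k\leq 1\}$, define Maynard--Tao weights $\lambda_{d_1,\ldots,d_k}$ supported on squarefree tuples with $(d_i,W)=1$ and $\prod_i d_i \leq R$, where $R = x^{\theta/2-\epsilon}$ and $\theta$ is chosen admissibly in Theorem \ref{main-theorem}. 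Set $w_n = \bigl(\sum_{d_i\mid n+h_i\,\forall i}\lambda_{d_1,\ldots,d_k}\bigr)^2$, and form
\[
S_1 = \sum_{\substack{n\in[x,x+h] \\ n\equiv v_0\,(\mathrm{mod}\,W)}} w_n,\qquad S_2 = \sum_{\substack{n\in[x,x+h] \\ n\equiv v_0\,(\mathrm{mod}\,W)}} \Big(\sum_{i=1}^{k}\mathbf{1}_{\mathcal{P}}(n+h_i)\Big)w_n.
\]
The standard Maynard--Tao asymptotic evaluation gives $S_1 \sim \frac{\varphi(W)^k h}{W^{k+1}}(\log R)^k I_k(F)$, while expanding $S_2$ via orthogonality on $n+h_i\pmod q$ and applying Corollary \ref{thm:CDT_short} term by term produces a main term $\frac{|C|}{|G|}\cdot\frac{\varphi(W)^k h}{W^{k+1}\log x}(\log R)^{k+1}\sum_{i=1}^{k}J_k^{(i)}(F)$, with $I_k$ and $J_k^{(i)}$ as in \cite{maynard2}.

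The crucial step is controlling the error in $S_2$, which reduces (after handling divisor weights via Cauchy--Schwarz) to sums of the shape
\[
\sideset{}{'}\sum_{q\leq R^2} \max_{(a,q)=1} \Big|\psi_C(x+h;q,a)-\psi_C(x;q,a)-\tfrac{|C|}{|G|}\tfrac{h}{\varphi(q)}\Big|.
\]
Because $d_L\mid W$ and $\lambda$ is supported on $(d_i,W)=1$, the moduli automatically satisfy $(q,d_L)=1$, so Theorem \ref{main-theorem} applies directly to yield the bound $O_D(h/(\log x)^D)$ for any fixed $D$, provided $R^2 < x^{\theta}$, which holds by our choice of $R$. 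Maynard's optimization of $F$ in \cite{maynard2} then produces a constant $C_L\in\N$ depending only on $L$ such that $\sum_{i=1}^{k} J_k^{(i)}(F) \geq C_L^{-1}(\log k) I_k(F)$ whenever $k\geq C_L$, after absorbing the factor $\frac{|C|}{|G|}\cdot\frac{\log R}{\log x}$ into $C_L$. Setting $\rho = C_L^{-1}\log k$, the positivity $S_2 - \rho S_1 > 0$ together with $w_n \geq 0$ forces at least $\gg h/(\log x)^k$ integers $n\in[x,x+h]$ for which $\#(\{n+h_1,\ldots,n+h_k\}\cap\mathcal{P})\geq\rho$. The main obstacle is the technical bookkeeping required to verify that Maynard's short-interval sieve evaluations in \cite{maynard2} carry over cleanly once the density $\frac{1}{\varphi(q)}$ is replaced by $\frac{|C|}{|G|}\frac{1}{\varphi(q)}$; once Theorem \ref{main-theorem} is in hand, the obstruction is purely routine, as primes ramifying in $L$ are absorbed into $W$ and never contribute to the sieve.
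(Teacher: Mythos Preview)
Your proposal is correct and follows essentially the same strategy as the paper: feed Theorem \ref{main-theorem} into Maynard's sieve machinery from \cite{maynard2}, with $d_L$ absorbed into $W$ so that all sieve moduli are automatically coprime to $d_L$. The only difference is packaging: the paper invokes Maynard's abstract black-box theorem (Theorem \ref{big-maynard-thm}) and simply verifies Hypothesis \ref{hyp} for the 6-tuple $(\N\cap[x,x+h],\mathcal{H}_k,\mathcal{P},d_L,x,\theta/2)$, whereas you unpack the $W$-trick, the weights $w_n$, and the $S_1,S_2$ asymptotics by hand.
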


\begin{remark*}
Some of the parameters in the statement of Theorem \ref{bounded-gaps-short} can have some uniformity in $x$ by appealing to the arguments in \cite{maynard2}.  In what follows, all parameters are constant with respect to $x$.
\end{remark*}

We now consider arithmetic consequences of Theorem \ref{bounded-gaps-short} in the theory of elliptic curves, modular forms, and modular $L$-functions; for an introduction to the relevant definitions and ideas, we refer the reader to \cite{web}.  We consider the following question of Serre \cite{Ser1}, which may be seen as an automorphic analogue of Bertrand's postulate on the existence of primes in every dyadic interval $[x,2x]$.

\begin{question*}
Let $q=e^{2\pi i z}$, and let $S_{\ell}(\Gamma_0(N),\chi)$ denote the space of weight $\ell$, level $N$ cusp forms.  For a nonzero cusp form $f(z)=\sum_{n=1}^\infty a_f(n)q^n\in S_{\ell}(\Gamma_0(N),\chi),$ let
\[
I_f(n)=\max\{i:a_f(n+j)=0\textup{ for all }0\leq j\leq i\}.
\]
\begin{enumerate}
\item Suppose that $f$ is of weight $\ell\geq2$ and is not a linear combination of forms with complex multiplication.  Is $I_f(n)\ll n^\delta$ for some $0\leq\delta<1$?
\item More generally, are there analogous results for forms with non-integral weights, or forms with respect to other Fuchsian groups?
\end{enumerate}
\end{question*}

Motivated by the second part of Serre's question, Balog and Ono \cite{BO} used \eqref{BO-short} to prove that if $f(z)=\sum_{n=1}^\infty a_f(n)q^n\in S_{\ell}(\Gamma_0(N),\chi)$ is a cusp form of weight $\ell\in\frac{1}{2}\N-\{\frac{1}{2}\}$ which is not a linear combination of weight $\frac{3}{2}$ theta functions, then there exists $\nu_f\in\N$ such that if $0\leq\delta<\frac{1}{\nu_f}$ and $h\geq x^{1-\delta}$, then
\begin{equation}
\label{bo-coeff}
\#\{n\in[x,x+h]:a_f(n)\neq 0\}\gg \frac{h}{\log x}.
\end{equation}
For such a cusp form $f$, it follows that $I_f(n)\ll n^{1-\frac{1}{\nu_f}+\epsilon}$ for any $\epsilon>0$, affirmatively answering Serre's question.  By using Theorem \ref{bounded-gaps-short} instead of \eqref{BO-short} in Balog and Ono's proof, we immediately obtain dense clusters of integers $n$ in short intervals for which $a_f(n)\neq0$.  Specifically, we have the following.

\begin{theorem}
\label{short-fourier}
Let $f(z)=\sum_{n=1}^\infty a_f(n)q^n\in S_{\ell}(\Gamma_0(N),\chi)$ be a nonzero cusp form of weight $\ell\in\frac{1}{2}\N-\{\frac{1}{2}\}$ which is not a linear combination of weight $\frac{3}{2}$ theta functions.  There exist constants $C_f,\nu_f\in\N$ such that if $0\leq\delta<\frac{1}{\nu_f}$, $h\geq x^{1-\delta}$, $k\geq C_f$ and $\mathcal{H}_{k}$ is admissible, then
\[
\#\{n\in[x,x+h]:\#\{h_i\in\mathcal{H}_{k}:a_f(n+h_i)\neq0\}\geq C_f^{-1}\log k\}\gg \frac{h}{(\log x)^k}.
\]
\end{theorem}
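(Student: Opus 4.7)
The plan is to substitute Theorem~\ref{bounded-gaps-short} for \eqref{BO-short} in Balog and Ono's proof of \eqref{bo-coeff}. Their argument converts the nonvanishing of $a_f(n)$ on a short interval into a short-interval Chebotarev density question, so Theorem~\ref{bounded-gaps-short} upgrades the density result \eqref{bo-coeff} to a clustered one with essentially no new ingredients.

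Concretely, I first import from \cite{BO} the following Chebotarev reduction for $f$. For integer weight $\ell\geq 2$, Deligne's construction attaches to $f$ a compatible system of Galois representations whose image, under the non-CM hypothesis, determines via Serre's work a Galois extension $L_f/\Q$ and a conjugacy class $C_f\subset\Gal(L_f/\Q)$ such that $a_f(p)\neq 0$ for every prime $p$ in the Chebotarev set $\mathcal{P}$ attached to $(L_f,C_f)$ via \eqref{cheb-set}. For half-integer weight $\ell\geq 3/2$ with $f$ not a linear combination of weight $\tfrac{3}{2}$ theta series, the Shimura correspondence combined with Waldspurger's formula and the work of Bruinier--Ono yields such an $L_f$ and $C_f$, together with a fixed positive integer $t_f$, so that $a_f(t_f p)\neq 0$ for every $p\in\mathcal{P}$. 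Let $E_f$ be the fixed field of the largest abelian subgroup of $\Gal(L_f/\Q)$ meeting $C_f$ and pick $\nu_f\in\N$ with $\nu_f>5[E_f:\Q]/2$, so that $0\leq\delta<1/\nu_f$ lies in the range for which Theorem~\ref{bounded-gaps-short} applies.

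Given an admissible $\mathcal{H}_k$, in the integer-weight case ($t_f=1$) Theorem~\ref{bounded-gaps-short} applies directly and produces $\gg h/(\log x)^k$ integers $n\in[x,x+h]$ with at least $C_{L_f}^{-1}\log k$ of the values $n+h_i$ lying in $\mathcal{P}$; each such index gives $a_f(n+h_i)\neq 0$, proving the theorem with $C_f=C_{L_f}$. In the half-integer-weight case one must accommodate the auxiliary multiplier $t_f$. By pigeonhole, extract from $\mathcal{H}_k$ a sub-tuple $\mathcal{H}_k'$ of size $\geq k/t_f$ whose elements share a common residue $-n_0$ modulo $t_f$, and restrict $n$ to the class $n\equiv n_0\pmod{t_f}$ so that $t_f\mid n+h_i$ for each $h_i\in\mathcal{H}_k'$. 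Writing $n=n_0+t_f m$ reduces the problem to finding $m$ in a short interval of length $\sim h/t_f$ such that many $m+(n_0+h_i)/t_f$ lie in $\mathcal{P}$, which is a direct application of Theorem~\ref{bounded-gaps-short} to the rescaled tuple; the latter is admissible at all primes $q\nmid t_f$ (via the bijection $\ell\mapsto t_f\ell\pmod q$) and at the finitely many primes dividing $t_f$ one absorbs obstructions by further sub-sampling $\mathcal{H}_k'$ at a cost of only a constant factor in $k$. The resulting count is $\gg h/(\log x)^k$ integers $n\in[x,x+h]$, each with $\geq C_f^{-1}\log k$ indices $i$ giving $a_f(n+h_i)\neq 0$, for $C_f$ chosen as a suitable multiple of $C_{L_f}$.

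The main obstacle is the half-integer-weight Chebotarev reduction imported in the second paragraph, which packages the Shimura lift together with Bruinier--Ono's Chebotarev-type nonvanishing for twisted central $L$-values; fortunately, this is already done in \cite{BO} and is invoked as a black box. Once it is in hand, the passage from the density statement \eqref{bo-coeff} to the cluster statement is a mechanical substitution of Theorem~\ref{bounded-gaps-short} for \eqref{BO-short}.
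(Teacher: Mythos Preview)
Your proposal is correct and follows essentially the same approach as the paper, whose entire proof is the single sentence preceding the theorem: replace \eqref{BO-short} by Theorem~\ref{bounded-gaps-short} inside Balog--Ono's argument. You have simply unpacked that substitution (the Chebotarev reduction, the choice of $\nu_f$ matching the range in Theorem~\ref{main-theorem}, and the rescaling/sub-sampling to absorb the auxiliary multiplier in the half-integer-weight case) in more detail than the paper bothers to, but the strategy is identical.
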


We address two corollaries of Theorem \ref{short-fourier} regarding central values of modular $L$-functions and ranks of elliptic curves.  Let $\mathcal{D}$ be the set of all fundamental discriminants, and let $f(z)=\sum_{n=1}^\infty a_f(n)q^n\in S_{\ell}(\Gamma_0(N))$ be a newform (i.e., a holomorphic cuspidal normalized Hecke eigenform) of weight $\ell\in2\N$.  Given $ d \in\mathcal{D}$, let $L(s,f_{ d })$ denote the $L$-function given by
\[
L(s,f_{ d })=\sum_{n=1}^\infty \frac{a_f(n)\chi_{ d }(n)}{n^{s+(\ell-1)/2}},
\]
where $\chi_{ d }$ is the Kronecker character for $\Q(\sqrt{ d })$.  Goldfeld \cite{Goldfeld} conjectured that the density of $ d \in\mathcal{D}$ for which $L(1/2,f_{ d })\neq0$ is $1/2$.

By the work of Shimura \cite{Shimura} and Waldspurger \cite{Wald}, Fourier coefficients of half-integer weight cusp forms $g$ that satisfy the hypotheses of Theorem \ref{short-fourier} interpolate central values of quadratic twists of modular $L$-functions associated to the Shimura correspondent of $g$.  Despite the fact that the Shimura correspondence is not surjective, Ono and Skinner \cite{OS} proved that such central values can be obtained in this fashion for the $L$-function of an even-integer weight newform with trivial nebentypus.  Using this  observation along with \eqref{bo-coeff}, Balog and Ono \cite{BO} proved that there exists $\nu_F\in\N$ such that if $0\leq\delta<\frac{1}{\nu_F}$ and $h\geq x^{1-\delta}$, then
\begin{equation}
\label{ono-skinner}
\#\{|d|\in[x,x+h]: d \in\mathcal{D},~L(1/2,f_{d })\neq0\}\gg\frac{h}{\log x}.
\end{equation}
This is the sharpest result in the direction of Goldfeld's conjecture which is valid for all newforms $f$; slight improvements exist for certain classes of newforms \cite{Ono}.  By using Theorem \ref{short-fourier} instead of \eqref{bo-coeff} in Balog and Ono's proof, we immediately obtain dense clusters of fundamental discriminants $d$ in short intervals for which $L(1/2,f_d)\neq0$.

\begin{corollary}
\label{central-values}
Let $f\in S_{2\ell}(\Gamma_0(N))$ be a newform with $\ell\in\N$.  There exists an arithmetic progression $a\bmod q$ (which depends explicitly on $f$) and constants $\nu_f,C_f\in\N$ such that if $0\leq\delta<\frac{1}{\nu_f}$, $h\geq x^{1-\delta}$, $k\geq C_f$, $\mathcal{H}_{k}$ is admissible, and
\[
\mathcal{N}_f(k,n)=\{h_i\in\mathcal{H}_{k}:n+qh_i\in\mathcal{D},L(1/2,f_{n+qh_i})\neq0\},
\]
then
\[
\#\{|n|\in[x,x+h]:n\equiv a~(\bmod~q),\#\mathcal{N}_f(k,n)\geq C_f^{-1}\log k\}\gg\frac{h}{(\log x)^k}.
\]
\end{corollary}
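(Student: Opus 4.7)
The plan is to follow Balog and Ono's deduction of \eqref{ono-skinner}, substituting Theorem \ref{short-fourier} for \eqref{bo-coeff}. The engine is the theorem of Ono and Skinner \cite{OS}, which, starting from a newform $f\in S_{2\ell}(\Gamma_0(N))$, produces a half-integer weight cusp form $g\in S_{\ell+1/2}(\Gamma_0(M),\chi)$ (which is not a linear combination of weight $\tfrac{3}{2}$ theta series), a fundamental discriminant $d_0$, and a residue class $a\bmod q$ depending explicitly on $f$, such that whenever $n\equiv a\pmod q$ is a positive square-free integer with $d_0n$ a fundamental discriminant, the Shimura-Waldspurger formula gives
\[
b_g(n)^2 \;=\; \kappa_f(n)\,L(1/2,f_{d_0n}),\qquad \kappa_f(n)>0.
\]
In particular, $b_g(n)\neq 0$ forces $L(1/2,f_{d_0n})\neq 0$.

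Next, I would apply Theorem \ref{short-fourier} to $g$, but with the input admissible tuple replaced by the scaled tuple $\{qh_1,\ldots,qh_k\}$ and with $n$ restricted to the progression $n\equiv a\pmod q$. The scaled tuple remains admissible: for primes $p\nmid q$ multiplication by $q$ permutes residue classes, while for primes $p\mid q$ every shift $qh_i$ vanishes mod $p$ and the restriction $n\equiv a\pmod q$ with $(a,q)=1$ ensures $n+qh_i\not\equiv 0\pmod p$. Imposing the progression $n\equiv a\pmod q$ within the Maynard-type sieve that powers Theorem \ref{bounded-gaps-short} changes the local sieve densities at only finitely many primes and multiplies the output by a positive constant depending on $f$; this is the same modification that Maynard performs in \cite{maynard2} to produce \eqref{thing-2} for Chebotarev sets, and is absorbed into $C_f$. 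The result is a lower bound $\gg h/(\log x)^k$ for the number of $n\in[x,x+h]$ with $n\equiv a\pmod q$ such that at least $C_f^{-1}\log k$ of the shifts $n+qh_i$ satisfy $b_g(n+qh_i)\neq 0$.

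Finally, a routine upper-bound sieve discards the few $n$ for which some $n+qh_i$ fails to be square-free: since only $k=O_f(1)$ forms are in play, this removes at most a constant fraction of the $n$ produced above. On the remaining set the Ono-Skinner identity converts each non-vanishing $b_g(n+qh_i)$ into $L(1/2,f_{d_0(n+qh_i)})\neq 0$, and the choice of $a\pmod q$ guarantees $d_0(n+qh_i)\in\mathcal{D}$. After absorbing $d_0$ into the parametrization (and adjusting $q$ accordingly), this is precisely the statement of the corollary.

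The main obstacle is purely bookkeeping: one must verify that the Maynard sieve underpinning Theorem \ref{short-fourier} still delivers a positive lower bound after the joint restrictions to $n\equiv a\pmod q$ and to $\prod_i(n+qh_i)$ square-free. Both constraints perturb the sieve only through local factors at finitely many primes (those dividing $q$ and those involved in the square-free sieve), so the sieve's main term survives with a smaller but still positive constant, which may be rolled into $C_f$ and the implicit constant in the $\gg$ symbol.
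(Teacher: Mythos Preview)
Your proposal is correct and follows essentially the same route as the paper: invoke Ono--Skinner to pass from $f$ to a half-integer weight form $g$ whose coefficients encode the central values, then feed $g$ into Theorem~\ref{short-fourier} with the appropriate progression and scaled tuple. The paper's only addition to what you wrote is a specific citation for the progression restriction: rather than handling $n\equiv a\pmod q$ by a direct local-density perturbation of the sieve, the paper points to Freiberg \cite[Proof of Theorem~1]{Freiburg} (combined with Maynard's arguments in \cite{maynard2}) as the template for imposing a fixed congruence condition in the Maynard sieve, but this is the same mechanism you describe.
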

\begin{remark*}
We need to restrict to the arithmetic progression $a\bmod q$ for technical reasons; see \cite{OS} for details.  We accomplish this by combining the arguments of Freiburg \cite[Proof of Theorem 1]{Freiburg} with Maynard's proofs in \cite{maynard2}, which is fairly straightforward.
\end{remark*}

Let $f$ be the newform associated to an elliptic curve $E/\Q$ of conductor $N$.  If $(d,4N)=1$, then $L(s,f_d)$ is the $L$-function of the $d$-quadratic twist $E_d/\Q$.  By the work of Kolyvagin \cite{Koly}, if $L(1/2,f_d)\neq0$, then the rank $\rk(E_d(\Q))$ of the Mordell-Weil group $E_d(\Q)$ is zero.  Thus Corollary \ref{central-values} immediately implies the existence of dense clusters of fundamental discriminants $d$ in short intervals such that $\rk(E_d)=0$.

\begin{corollary}
\label{twists}
Let $E/\Q$ be an elliptic curve.  There exist an arithmetic progression $a\bmod q$ (which depends explicitly on $E$) and constants $\nu_E,C_E\in\N$ such that if $0\leq\delta<\frac{1}{\nu_E}$, $h\geq x^{1-\delta}$, $k\geq C_E$, $\mathcal{H}_{k}$ is admissible, and
\[
\mathcal{N}_E(k,n)=\{h_i\in\mathcal{H}_{k}:n+qh_i\in\mathcal{D},\rk(E_{n+qh_i})=0\},
\]
then
\[
\#\left\{|n|\in[x,x+h]:n\equiv a~(\bmod~q),\#\mathcal{N}_E(k,n)\geq C_E^{-1}\log k\right\}\gg\frac{h}{(\log x)^k}.
\]
\end{corollary}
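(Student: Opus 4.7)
The plan is to derive Corollary \ref{twists} as an immediate consequence of Corollary \ref{central-values}, with the only nontrivial input being Kolyvagin's theorem and some care with the technical hypothesis that the fundamental discriminant be coprime to the conductor.

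First, by the modularity theorem (Wiles, Taylor--Wiles, Breuil--Conrad--Diamond--Taylor), attach to $E/\Q$ its associated newform $f\in S_{2}(\Gamma_0(N))$, where $N$ is the conductor of $E$. Now apply Corollary \ref{central-values} to $f$ (which corresponds to the case $\ell=1$): this produces the arithmetic progression $a\bmod q$ and constants $\nu_f,C_f\in\N$ controlling the location and count of fundamental discriminants $d=n+qh_i$ with $L(1/2,f_d)\neq 0$. I would then set $\nu_E:=\nu_f$ and $C_E:=C_f$, and, if necessary, refine the residue class $a\bmod q$ (replacing $q$ by $\text{lcm}(q,4N)$ and selecting a compatible residue) so that every admissible $n$ automatically satisfies $(n+qh_i,4N)=1$ for all $i$; the admissibility of $\mathcal{H}_k$ together with standard CRT considerations makes this adjustment harmless and only changes the implied constant.

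With this choice, for each $n$ counted by the left-hand side of Corollary \ref{central-values} and for each $h_i\in\mathcal{N}_f(k,n)$, the discriminant $d:=n+qh_i$ is a fundamental discriminant coprime to $4N$, and the twisted $L$-function $L(s,f_d)$ coincides with the Hasse--Weil $L$-function of the quadratic twist $E_d/\Q$. Kolyvagin's theorem \cite{Koly} (together with the modularity of $E_d$) then gives that $L(1/2,f_d)\neq 0$ implies $\rk(E_d(\Q))=0$. Consequently $\mathcal{N}_f(k,n)\subseteq\mathcal{N}_E(k,n)$, so the cardinality lower bound $\#\mathcal{N}_f(k,n)\geq C_f^{-1}\log k$ transfers to $\#\mathcal{N}_E(k,n)\geq C_E^{-1}\log k$, and the claimed lower bound $\gg h/(\log x)^k$ is inherited directly from Corollary \ref{central-values}.

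There is no real obstacle here beyond bookkeeping: the only point requiring a moment's thought is the coprimality condition $(d,4N)=1$ needed to identify $L(s,f_d)$ with the $L$-function of $E_d$ and to apply Kolyvagin's theorem in its standard form. This is why the statement restricts $n$ to an arithmetic progression modulo $q$, exactly as in Corollary \ref{central-values}, and this restriction is preserved under the Maynard--type sieve machinery driving the proof of Theorem \ref{bounded-gaps-short} (as noted in the remark following Corollary \ref{central-values}, following the argument of Freiburg).
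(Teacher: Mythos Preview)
Your proposal is correct and follows essentially the same route as the paper: attach to $E$ its modular newform $f$, apply Corollary \ref{central-values}, and then invoke Kolyvagin's theorem to pass from $L(1/2,f_d)\neq 0$ to $\rk(E_d(\Q))=0$, yielding $\mathcal{N}_f(k,n)\subseteq\mathcal{N}_E(k,n)$. The paper treats the coprimality condition $(d,4N)=1$ as already built into the arithmetic progression $a\bmod q$ coming from the Ono--Skinner construction, so your explicit refinement of the modulus is extra caution rather than a new idea.
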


For our final application, consider an elliptic curve $E/\Q$.  In \cite{Murty2,Ser1}, the distribution of the quantity $a_E(p):=p+1-\#E(\F_p)$ is studied.  We apply our results to study the distribution of $a_E(p)\pmod m$ in short intervals, where $m$ is a given integer.  It follows from the work of Shiu \cite{Shiu} that if $E/\Q$ has a rational point of order $m$, then for every $j\in\N$ and every $i\not\equiv 1\pmod m$, there exists an $n\in\N$ such that
\[
a_E(p_n)\equiv a_E(p_{n+1})\equiv a_E(p_{n+2})\equiv\cdots\equiv a_E(p_{n+j})\equiv i\pmod m,
\]
where the primes are indexed in increasing order.  Using \eqref{BO-short} and the definition of the action of Galois on the torsion points of $E$, Balog and Ono \cite{BO} proved that if $m\in\N$ and $i\bmod m$ is a residue class for which there is a prime of good reduction $p_0$ with $a_E(p_0)\equiv i\pmod m$, then there exists $\nu_{E,m}\in\N$ such that if $0\leq\delta<\frac{1}{\nu_{E,m}}$ and $h\geq x^{1-\delta}$, then
\begin{equation}
\label{eqn:shiu-string}
\#\{p\in[x,x+h]:a_E(p)\equiv i~(\textup{mod}~m)\}\gg\frac{h}{\log x}.
\end{equation}
By using Theorem \ref{bounded-gaps-short} instead of \eqref{BO-short} in Balog and Ono's proof, we immediately obtain dense clusters of primes $p$ in short intervals for which $a_E(p)\equiv i\pmod m$.

\begin{corollary}
\label{shoe}
Let $E/\Q$ be an elliptic curve, let $m\in\N$, and let $i\bmod m$ be a residue class for which there is a prime of good reduction $p_0$ with $a_E(p_0)\equiv i\pmod m$.  There exist constants $\nu_{E,m}, C_{E,m}\in\N$ such that if $0\leq\delta<\frac{1}{\nu_{E,m}}$, $h\geq x^{1-\delta}$, $k\geq C_{E,m}$, and $\mathcal{H}_{k}$ is admissible, then
\begin{align*}
&\#\{n\in[x,x+h]:\#\{h_j\in\mathcal{H}_{k}:n+h_j\in\mathbb{P},a_E(n+h_j)\equiv i~(\bmod~m)\}\geq C_{E,m}^{-1}\log k\}\\
&\gg\frac{h}{(\log x)^k}.
\end{align*}
\end{corollary}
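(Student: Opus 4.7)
The plan is to recast the congruence condition $a_E(p)\equiv i\pmod m$ as a Chebotarev condition in an appropriate Galois extension and then apply Theorem \ref{bounded-gaps-short} directly. Let $N$ denote the conductor of $E$, and consider the $m$-torsion field $L=\Q(E[m])$. The Galois action on $E[m]\cong(\Z/m\Z)^2$ gives a representation $\rho_{E,m}\colon\Gal(L/\Q)\hookrightarrow\GL_2(\Z/m\Z)$, and for every prime $p\nmid mN$ of good reduction, the Frobenius class $\left[\frac{L/\Q}{p}\right]$ has image in $\GL_2(\Z/m\Z)$ with trace congruent to $a_E(p)\pmod m$ and determinant congruent to $p\pmod m$.

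Since the hypothesis furnishes a prime $p_0\nmid mN$ with $a_E(p_0)\equiv i\pmod m$, the subset $S\subset G:=\Gal(L/\Q)$ of elements whose image under $\rho_{E,m}$ has trace $\equiv i\pmod m$ is nonempty. First I would fix one conjugacy class $C\subset G$ contained in $S$, and let
\[
\mathcal{P}=\left\{p:p\nmid d_L,~\left[\tfrac{L/\Q}{p}\right]=C\right\}
\]
be the associated Chebotarev set as in \eqref{cheb-set}. By construction, every $p\in\mathcal{P}$ satisfies $a_E(p)\equiv i\pmod m$ (the finite set of primes dividing $mN$ but not $d_L$ causes no trouble, since they contribute negligibly). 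Hence
\[
\#\{h_j\in\mathcal{H}_k:n+h_j\in\mathcal{P}\}\leq \#\{h_j\in\mathcal{H}_k:n+h_j\in\mathbb{P},~a_E(n+h_j)\equiv i\pmod m\}.
\]

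Now I would apply Theorem \ref{bounded-gaps-short} to the Galois extension $L/\Q$ and the conjugacy class $C$. That theorem provides a constant $C_L\in\N$ such that for all $k\geq C_L$ and all admissible $\mathcal{H}_k$, the set of $n\in[x,x+h]$ for which at least $C_L^{-1}\log k$ of the shifts $n+h_j$ lie in $\mathcal{P}$ has size $\gg h/(\log x)^k$, provided $h\geq x^{1-\delta}$ with $\delta$ chosen admissibly for the parameters attached to $L$. Setting $C_{E,m}=C_L$ and choosing $\nu_{E,m}\in\N$ with $\frac{1}{\nu_{E,m}}<\frac{2}{5[E_0:\Q]}$, where $E_0$ is the fixed field of the largest abelian subgroup $H\subset G$ meeting $C$, yields the displayed lower bound for the count of $n\in[x,x+h]$ with many $n+h_j$ prime and satisfying the desired congruence.

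The substantive content is entirely contained in Theorem \ref{bounded-gaps-short}; the only place that demands care is the verification that the Frobenius-trace formalism correctly translates the condition on $a_E(p)\pmod m$ into membership in a union of conjugacy classes of $G$, and in particular the harmless exclusion of the finitely many primes dividing $mNd_L$. Once that translation is fixed, the proof is a direct specialization, and this is precisely the reduction already made explicit by Balog and Ono in their derivation of \eqref{eqn:shiu-string} from \eqref{BO-short}.
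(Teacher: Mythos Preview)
Your proposal is correct and follows essentially the same approach as the paper: the paper simply says that one obtains the corollary ``by using Theorem \ref{bounded-gaps-short} instead of \eqref{BO-short} in Balog and Ono's proof,'' and Balog and Ono's argument is precisely the translation via the Galois action on $E[m]$ that you spell out. Your write-up in fact supplies more detail than the paper does, including the explicit identification of $\nu_{E,m}$ in terms of the fixed field $E_0$ of the abelian subgroup $H$.
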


\section*{Acknowledgments}

The author thanks James Maynard, Robert Lemke Oliver, Ken Ono, Jeremy Rouse, Kannan Soundararajan, and the anonymous referees for their comments and suggestions.

\section{Proof of Theorem \ref{main-theorem}}
\label{sec:proof_main_thm}

For a number field $F$, we let $n_F=[F:\Q]$ and $d_F$ equal the absolute discriminant of $F$.  Let $L/K$ be a Galois extension of number fields with Galois group $G$, and let $C\subset G$ be a fixed conjugacy class.  Let $H$ be the largest abelian subgroup of $G$ such that $H\cap C$ is nonempty, and let $E$ be the field fixed by $H$.  If $L\cap\Q(\zeta_q)=\Q$, then $L( \zeta_q)/E$ is an abelian extension with Galois group $H_q$, which is isomorphic to $H\oplus (\Z/q\Z)^\times$.  Let $\chi$ be a Dirichlet character modulo $q$, and let $\xi$ be a Hecke character in the dual group $\widehat{H}$.  Since $L\cap\Q(\zeta_q)=\Q$, the characters $\omega$ in the dual group $\widehat{H}_q$ are of the form $\xi\otimes\chi$, and the conductor $\mathfrak{f}_{\omega}$ of $\omega$ satisfies $\mathrm{N}\mathfrak{f}_{\omega}\leq q^{n_K}\mathrm{N}\mathfrak{f}_{\xi}$, where $\mathrm{N}$ is the absolute field norm of $E$ (cf. \cite[Sections 0 and 1]{MP}).  Unless otherwise specified, all implied constants in the asymptotic notation $\ll$ or $O(\cdot)$ will depend in an effectively computable way on at most $\max_{\xi\in\widehat{H}}\mathrm{N}\mathfrak{f}_{\xi}$.

By Equation 3.2 of \cite{BO} and the functional equation for Hecke $L$-functions, if $y\leq h$, $\frac{1}{2}x\leq N\leq x$, and $T\leq x$, then
\begin{align*}
\max_{y\leq h}\max_{\frac{1}{2}x\leq N\leq x}\bigg|\psi_C(N+y;q,a)&-\psi_C(N;q,a)-\frac{|C|}{|G|}\frac{y}{\varphi(q)}\bigg|\\
&\ll \frac{h}{\varphi(q)}\sum_{\omega\in\widehat{H}_q}\sum_{\substack{\rho=\beta+i\gamma \\ L(\rho,\tilde{\omega})=0 \\ |\gamma|\leq T \\ \frac{1}{2}\leq \beta<1}}x^{\beta-1}+\frac{x(\log x)^2}{T},
\end{align*}
where $\tilde{\omega}$ is the primitive character which induces $\omega$.  Thus Theorem \ref{main-theorem} will follow from proving that for any fixed $D>0$, we have that
\begin{equation}
\label{eq1}
h\sideset{}{'}\sum_{q\leq Q}\frac{1}{\varphi(q)}\sideset{}{^*}\sum_{\omega\in\widehat{H}_q}\sum_{\substack{\rho_{\omega}=\beta_{\omega}+i\gamma_{\omega} \\ \frac{1}{2}\leq \beta_{\omega}<1 \\ |\gamma_{\omega}|\leq T}}x^{\beta_{\omega}-1}+\frac{Qx(\log x)^2}{T}\ll \frac{h}{(\log x)^D},
\end{equation}
where $\rho_{\omega}$ is a nontrivial zero of $L(s,\omega)$ and $\sum^*$ denotes summing over primitive characters $\omega$.  (See also \cite[Section 1]{MP} for a similar reduction.)  We now decompose the interval $[1,Q]$ into dyadic intervals of the form $[2^n,2^{n+1})$, where $0\leq n\leq\lceil\log_2 Q\rceil$.  Since there are $O(\log Q)$ such intervals and $\varphi(q)^{-1}\ll q^{-1}\log\log q$ for $q\geq6$, the left side of \eqref{eq1} is
\begin{equation}
\label{eq2}
(\log Q)(\log\log Q)\max_{1\leq Q_1\leq Q}\frac{1}{Q_1}\sideset{}{'}\sum_{q\leq Q_1}~\sideset{}{^*}\sum_{\omega\in\widehat{H}_q}\sum_{\substack{\rho_{\omega}=\beta_{\omega}+i\gamma_{\omega} \\ \frac{1}{2}\leq \beta_{\omega}<1 \\ |\gamma_{\omega}|\leq T}}x^{\beta_{\omega}-1}+\frac{Qx(\log x)^2}{T}.
\end{equation}
If $\omega$ is primitive, then $\f_{\omega}$ is also the modulus of $\omega$.  Since $\mathrm{N}\f_{\omega}\leq q^{n_E}\max_{\xi\in\widehat{H}}\mathrm{N}\mathfrak{f}_{\xi}$, \eqref{eq2} is
\begin{equation}
\label{eqn:char_sum}
\ll (\log Q)(\log\log Q)\max_{1\leq Q_1\leq Q}\frac{1}{Q_1}\sum_{\mathrm{N}\a\leq Q_1^{n_E}}~\sideset{}{^*}\sum_{\omega\bmod\a}~\sum_{\substack{\rho_{\omega}=\beta_{\omega}+i\gamma_{\omega} \\ \frac{1}{2}\leq \beta_{\omega}<1 \\ |\gamma_{\omega}|\leq T}}x^{\beta_{\omega}-1}+\frac{Qx(\log x)^2}{T}.
\end{equation}

For $\frac{1}{2}\leq\sigma\leq 1$, let $N_{\omega}(\sigma,T):=\#\{\rho=\beta+i\gamma:L(\rho,\omega)=0,\sigma\leq\beta,|\gamma|\leq T\}$ and
\[
N(\sigma,R,T):=\sum_{\mathrm{N}\a\leq R}~~\sideset{}{^*}\sum_{\omega\bmod\a}N_{\omega}(\sigma,T).
\]
Building on the seminal work of Montgomery \cite[Theorem 12.2]{Montgomery-topics}, Hinz \cite{Hinz} proved estimates for $N(\sigma,Q^{n_E},T)$ when $n_E\geq2$.  The following proposition is a direct corollary of their combined work.
\begin{proposition}
\label{prop:Hecke_ZDE}
If $T\geq2$, $R\geq1$, and $\frac{1}{2}\leq\sigma\leq 1$, then
\[
N(\sigma,R,T)\ll (R^2 T^{n_E})^{\frac{5}{2}(1-\sigma)}(\log QT)^{9n_E+10}.
\]
\end{proposition}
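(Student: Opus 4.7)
The plan is to treat the proposition as a direct corollary of two existing zero-density estimates fused into a single uniform statement: Montgomery's Theorem 12.2 in the degenerate case $n_E=1$ and Hinz's extension to Hecke $L$-functions over number fields in the case $n_E\ge 2$. Neither case requires any new zero-density input; the only work is to verify that both bounds can be packaged under the common envelope $(R^2 T^{n_E})^{5(1-\sigma)/2}(\log QT)^{9n_E+10}$.

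For $n_E=1$, so $E=\Q$, the primitive Hecke characters $\omega$ of conductor $\a$ with $\mathrm{N}\a\le R$ are precisely the primitive Dirichlet characters of modulus $q\le R$, and $N(\sigma,R,T)$ coincides with the classical sum of zero-counts. Montgomery's theorem then gives
\[
N(\sigma,R,T)\ll (R^2 T)^{5(1-\sigma)/2}(\log RT)^{C}
\]
for some absolute constant $C$, which is exactly the asserted bound with $n_E=1$, since the logarithmic exponent $9n_E+10\ge 19$ easily absorbs $C$. For $n_E\ge 2$, I would cite Hinz's zero-density estimate for Hecke $L$-functions over $E$, whose proof replicates the Huxley--Montgomery argument over a number field. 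The essential adjustments are: (i) replacing the Dirichlet approximate functional equation with the Hecke one, so that the analytic conductor of $L(s,\omega)$ at height $T$ grows like $\mathrm{N}\a\cdot T^{n_E}$ rather than $q\cdot T$; (ii) deploying a large sieve inequality for primitive Hecke characters summed over integral ideals $\a$ with $\mathrm{N}\a\le R$; and (iii) running the zero-detection mollifier argument with a mollifier of length $(R^2 T^{n_E})^{1/2}$. The substitution $T\leadsto T^{n_E}$ in the final bound reflects precisely the change in analytic conductor, while the polynomial power of $\log QT$ grows linearly in $n_E$ to accommodate the $n_E$ archimedean factors appearing in the mean-value estimates over the critical line.

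The principal technical obstacle, which is addressed in Hinz's paper rather than reproven here, is the construction of a large sieve inequality for primitive Hecke characters with sharp joint dependence on $R$ and $T^{n_E}$; once this is available, the remainder of Montgomery's argument transports line by line. Assembling the two cases and taking the maximum of the logarithmic exponents then yields the uniform bound in the stated range $\frac{1}{2}\le\sigma\le 1$ and $T\ge 2$.
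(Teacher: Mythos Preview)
Your proposal is correct and matches the paper's approach exactly: the paper states the proposition as a direct corollary of Montgomery's Theorem 12.2 for $n_E=1$ together with Hinz's extension for $n_E\ge 2$, offering no further argument. Your additional commentary on why the analytic conductor forces $T\leadsto T^{n_E}$ and on the role of the Hecke large sieve is helpful exposition but goes beyond what the paper itself supplies.
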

\begin{proof}[Proof of Theorem \ref{main-theorem}]
Let $D$, $\delta$, and $h$ be as in the statement of Theorem \ref{main-theorem}.  Let $0<\epsilon<1$, and choose $Q=x^{\frac{2(1-\epsilon)-5n_E\delta}{15n_E}}(\log x)^{-\frac{D+2}{3}}$ and $T=x^{\frac{2(1-\epsilon+5n_E\delta)}{15n_E}}(\log x)^{\frac{2(D+2)}{3}}.$  With $1\leq Q_1\leq Q$, we have
\begin{align}
\label{eq4}
\sum_{\mathrm{N}\a\leq Q_1^{n_E}}~~\sideset{}{^*}\sum_{\omega\bmod\a}~\sum_{\substack{\rho_{\omega}=\beta_{\omega}+i\gamma_{\omega} \\ \frac{1}{2}\leq \beta_{\omega}<1 \\ |\gamma_{\omega}|\leq T}}x^{\beta_{\omega}-1}&\ll \log x\max_{\frac{1}{2}\leq\sigma< 1}x^{\sigma-1}N(\sigma,Q_1^{n_E},T).
\end{align}
By the zero-free region for Hecke $L$-functions proven by Bartz \cite{Bartz} and the fact that we restrict $q$ so that $L\cap\Q(\zeta_q)=\Q$, there exists a constant $b_L>0$ such that if
\begin{equation}
\label{ZFR-vino}
1-\eta(Q_1,x)<\sigma\leq 1,\qquad \eta(Q_1,x):=\frac{b_L}{\max\{\log Q_1,(\log x)^{3/4}\}},
\end{equation}
then $N(\sigma,Q_1^{n_E},T)$ is either 0 or 1.  If $N(\sigma,Q_1^{n_E},T)=1$, then the zero $\beta_1$ which is counted is a Siegel zero associated to an exceptional modulus $q_1$ and an exceptional real quadratic character in $\widehat{H}_{q_1}$.  As in \cite[Section 2]{MP}, a field-uniform version of Siegel's theorem for Hecke $L$-functions implies that $x^{\beta_1-1}\ll(\log x)^{-D-3}$ with an ineffective implied constant. 

Since $(Q^{2}T)^{\frac{5}{2}n_E}=x^{1-\epsilon}$, it follows from Proposition \ref{prop:Hecke_ZDE} that
\begin{align*}
\log x\max_{\frac{1}{2}\leq\sigma\leq 1-\eta(Q_1,T)}x^{\sigma-1}N(\sigma,Q_1^{n_E},T)&\ll(\log x)^{9n_E+11}\max_{\frac{1}{2}\leq\sigma\leq 1-\eta(Q_1,T)}((Q^2 T)^{\frac{5}{2}n_E}/x)^{1-\sigma}\notag\\
&\ll (\log x)^{9n_E+11} x^{-\epsilon\eta(Q_1,x)}.
\end{align*}
By our definition of $\eta(Q_1,x)$,
\begin{equation}
\label{eqn:eta_bound}
x^{-\epsilon\eta(Q_1,x)}\ll\begin{cases}
(\log x)^{-(9n_E+14+D)}&\mbox{if $1\leq Q_1\leq \exp((\log x)^{3/4})$},\\
1&\mbox{if $\exp((\log x)^{3/4})<Q_1\leq Q$}.
\end{cases}
\end{equation}
We have now bounded \eqref{eq4}, and so \eqref{eqn:char_sum} is bounded by
\[
h(\log Q)(\log\log Q)(\log x)\max_{Q_1\leq Q}\frac{1}{Q_1}((\log x)^{-D-3}+(\log x)^{9n_E+11} x^{-\epsilon\eta(Q_1,x)})+\frac{Q x(\log x)^2}{T}.
\]
For our choice of $h$, $Q$, and $T$, this is bounded by $h(\log x)^{-D}$ using \eqref{eqn:eta_bound}.
\end{proof}


\section{Proof of Theorem \ref{bounded-gaps-short}}
\label{sec:proofs2}

We will use Theorem \ref{main-theorem} to prove Theorem \ref{bounded-gaps-short}.  Given a set of integers $\mathfrak{A}$, a set of primes $\mathfrak{P}\subset\mathfrak{A}$, and a linear form $L(n)=n+h$, define
\begin{equation*}
\begin{aligned}[c]
\mathfrak{A}(x)&=\{n\in\mathfrak{A}:x< n\leq 2x\},\\
L(\mathfrak{A})&=\{L(n):n\in\mathfrak{A}\},\\
\mathfrak{P}_{L,\mathfrak{A}}(x,y)&=L(\mathfrak{A}(x))\cap\mathfrak{P},
\end{aligned}
\begin{aligned}[c]
\mathfrak{A}(x;q,a)&=\{n\in\mathfrak{A}(x):n\equiv a~(\mathrm{mod}~q)\},\\
\varphi_{L}(q)&=\varphi(h q)/\varphi(h),\\
\mathfrak{P}_{L,\mathfrak{A}}(x;q,a)&=L(\mathfrak{A}(x;q,a))\cap\mathfrak{P}.
\end{aligned}
\end{equation*}
We consider the 6-tuple $(\mathfrak{A},\mathcal{L}_k,\mathfrak{P},B,x,\theta)$, where $\mathcal{H}_k$ is admissible, $\mathcal{L}_k=\{L_i(n)=n+h_i:h_i\in\mathcal{H}_k\}$, $B\in\N$ is constant, $x$ is a large real number, and $0\leq\theta<1$.  We present a very general hypothesis that Maynard states in Section 2 of \cite{maynard2}.

\begin{hypothesis}
\label{hyp}
With the above notation, consider the 6-tuple $(\mathfrak{A},\mathcal{H}_k,\mathfrak{P},B,x,\theta)$.
\begin{enumerate}
\item We have
\[
\sum_{q\leq x^\theta}\max_a\left|\#\mathfrak{A}(x;q,a)-\frac{\#\mathfrak{A}(x)}{q}\right|\ll \frac{\#\mathfrak{A}(x)}{(\log x)^{100k^2}}.
\]
\item For any $L\in \mathcal{H}_k$, we have
\[
\sum_{\substack{q\leq x^{\theta} \\ (q,B)=1}}\max_{(L(a),q)=1}\left|\#\mathfrak{P}_{L,\mathfrak{A}}(x;q,a)-\frac{\#\mathfrak{P}_{L,\mathfrak{A}}(x)}{\varphi_L(q)}\right|\ll \frac{\#\mathfrak{P}_{L,\mathfrak{A}}(x)}{(\log x)^{100k^2}}.
\]
\item For any $q\leq x^\theta$, we have $\#\mathfrak{A}(x;q,a)\ll \#\mathfrak{A}(x)/q.$
\end{enumerate}
\end{hypothesis}
\noindent
For $(\mathfrak{A},\mathcal{H}_k,\mathfrak{P},B,x,\theta)$ satisfying Hypothesis \ref{hyp}, Maynard proves the following in \cite{maynard2}.

\begin{theorem}
\label{big-maynard-thm}
Let $\alpha>0$ and $0\leq\theta<1$.  There is a constant $C$ depending only on $\theta$ and $\alpha$ so that the following holds.  Let $(\mathfrak{A},\mathcal{H}_k,\mathfrak{P},B,x,\theta)$ satisfy Hypothesis \ref{hyp}.  Assume that $C\leq k\leq (\log x)^\alpha$ and $h_i\leq x^\alpha$ for all $1\leq i\leq k$.  If $\delta>(\log k)^{-1}$ is such that
\[
\frac{1}{k}\frac{\varphi(B)}{B}\sum_{L_i\in\mathcal{H}_k}\#\mathfrak{P}_{L_i,\mathfrak{A}}(x)\geq\delta\frac{\#\mathfrak{A}(x)}{\log x},
\]
then
\[
\#\{n\in\mathfrak{A}(x):\#(\mathcal{H}_k(n)\cap\mathfrak{P})\geq C^{-1}\delta\log k\}\gg\frac{\#\mathfrak{A}(x)}{(\log x)^k \exp(Ck)}.
\]
\end{theorem}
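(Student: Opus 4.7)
The plan is to implement the multidimensional Selberg-type sieve introduced by Maynard in \cite{maynard}, adapted to the abstract framework of Hypothesis \ref{hyp}. Fix a parameter $R = x^{\theta/2 - \varepsilon}$ and a smooth function $F : [0,\infty)^k \to \R$ supported on the simplex $\{\vec t : t_1 + \cdots + t_k \leq 1\}$. For each $n \in \mathfrak{A}(x)$ define nonnegative sieve weights
\[
w_n = \Bigg(\sum_{\substack{d_i \mid L_i(n)\ \forall i \\ (d_1\cdots d_k,\, B)=1}} \lambda_{d_1,\dots,d_k}\Bigg)^{\!2},
\]
where $\lambda_{\vec d}$ is built out of $F(\log r_1/\log R,\dots,\log r_k/\log R)$ via the Möbius inversion used in \cite[Section 5]{maynard} and is supported on squarefree $d_i$ with $\prod_i d_i \leq R$. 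The goal is to establish positivity of
\[
S = \sum_{n \in \mathfrak{A}(x)}\Bigg(\sum_{i=1}^{k} \mathbf{1}_{\mathfrak{P}}(L_i(n)) - \rho\Bigg) w_n
\]
for $\rho = C^{-1}\delta \log k$: positivity forces some $n \in \mathfrak{A}(x)$ to have at least $\lfloor \rho \rfloor + 1$ of the $L_i(n)$ lying in $\mathfrak{P}$. The lower bound on the \emph{count} of such $n$ will come from combining the main-term size of $S$ with the pointwise bound $w_n \ll R^{2+o(1)} \exp(O(k))$.

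The core computation is the evaluation of the two moments $S_1 = \sum_n w_n$ and $S_2 = \sum_{i} \sum_{n:\, L_i(n) \in \mathfrak{P}} w_n$. Expanding each square and swapping summation reduces both to weighted counts in arithmetic progressions of modulus at most $R^2 \leq x^\theta$. Hypothesis \ref{hyp}(1) absorbs the error for $S_1$, and Hypothesis \ref{hyp}(2) absorbs the error for each summand of $S_2$; Hypothesis \ref{hyp}(3) controls the truncation of large moduli. The remaining main-term calculation is a direct transcription of \cite[Sections 5--6]{maynard} and gives
\[
S_1 \sim \#\mathfrak{A}(x)\,(\log R)^k\, \mathfrak{S}_k^{-1}\, I_k(F), \qquad S_2 \sim \frac{\varphi(B)}{B}\sum_{i=1}^{k}\#\mathfrak{P}_{L_i,\mathfrak{A}}(x)\,(\log R)^{k-1}\, \mathfrak{S}_k^{-1}\, J_k^{(i)}(F),
\]
where $I_k(F) = \int F^2$, $J_k^{(i)}(F) = \int \bigl(\int F\, dt_i\bigr)^{\!2} \prod_{j \neq i} dt_j$, and $\mathfrak{S}_k$ is the usual singular series attached to $\mathcal{H}_k$ and $B$.

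Combining these asymptotics with the lower-density hypothesis $\tfrac{1}{k}\tfrac{\varphi(B)}{B}\sum_i \#\mathfrak{P}_{L_i,\mathfrak{A}}(x) \geq \delta\,\#\mathfrak{A}(x)/\log x$ yields an inequality of the form $S_2/S_1 \geq c_\theta\, \delta\, M_k(F)/I_k(F)\,(1+o(1))$ with $M_k(F) := \sum_i J_k^{(i)}(F)$ and $c_\theta > 0$ depending only on $\theta$. The analytic heart of \cite{maynard,maynard2} exhibits a choice of $F$ (roughly a symmetric function of $t_1 + \cdots + t_k$ smoothed near the boundary of the simplex) for which $M_k(F)/I_k(F) \geq \log k - O(1)$, uniformly in $k$. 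Thus once $C$ is chosen large enough in terms of $\theta$ and $\alpha$, the ratio $S_2/S_1$ exceeds $\rho = C^{-1}\delta\log k$, and $S > 0$. The argument of \cite[Proposition 4.2]{maynard2} then converts positivity of $S$ into the desired lower bound of order $\#\mathfrak{A}(x)/\bigl((\log x)^k \exp(Ck)\bigr)$ via the pointwise upper bound on $w_n$.

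The principal obstacle is propagating uniformity in $k$ throughout the range $C \leq k \leq (\log x)^\alpha$. The expansions of $S_1$ and $S_2$ produce roughly $k^{O(k)}$ main terms and the same number of error terms, so the ferocious saving $(\log x)^{-100k^2}$ demanded in parts (1)--(2) of Hypothesis \ref{hyp} is precisely calibrated to defeat these losses. One must also verify that $\mathfrak{S}_k$ stays uniformly bounded above and below in $k$ (using $h_i \leq x^\alpha$ and admissibility), and the construction of $F$ realizing $M_k/I_k \geq \log k - O(1)$ requires the symmetric boundary-smoothed choice of Maynard in order to avoid large-$k$ losses in the variational problem. Apart from these uniformity issues, the proof is a direct abstraction of the argument of \cite{maynard2} to the black-box hypothesis above.
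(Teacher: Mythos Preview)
The paper does not prove Theorem \ref{big-maynard-thm} at all: it simply quotes the result as Maynard's theorem from \cite{maynard2} and uses it as a black box in the proof of Theorem \ref{bounded-gaps-short}. Your sketch, by contrast, outlines the actual Maynard--Tao sieve argument behind the theorem, and as such is a faithful summary of the proof in \cite{maynard2}: the weight construction, the two-moment evaluation via Hypothesis \ref{hyp}, the variational lower bound $M_k(F)/I_k(F) \geq \log k - O(1)$, and the conversion from positivity of $S$ to a lower bound on the count are all correct in outline. So your proposal is not wrong, but it goes well beyond what the paper itself does; for the purposes of this paper a one-line citation suffices.
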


\begin{proof}[Proof of Theorem \ref{bounded-gaps-short}]
The proof is essentially the same as Theorems 3.4 and 3.5 in \cite{maynard2}.  Let $\delta$, $h$, and $\theta$ be as in Theorem \ref{main-theorem}.  Let $\mathfrak{A}=\N\cap[x,x+h]$, $B=d_L$, and $\mathfrak{P}=\mathcal{P}$.  Parts (i) and (iii) of Hypothesis \ref{hyp} are trivial to check for the 6-tuple $(\N\cap[x,x+h],\mathcal{H}_{k},\mathcal{P},d_L,x,\theta/2)$.  By Theorem \ref{main-theorem} and partial summation, all of Hypothesis \ref{hyp} holds when $D$ and $x$ are sufficiently large in terms of $k$ and $\theta$.  Given a suitable constant $C_L>0$ (computed as in \cite{maynard, JT}), we let $k\geq C_L$.  For our choice of $\mathfrak{A}$ and $\mathfrak{P}$, we have the inequality
\[
\frac{1}{k}\frac{\varphi(d_L)}{d_L}\sum_{L_i\in\mathcal{H}_k}\#\mathfrak{P}_{L_i,\mathfrak{A}}(x)\geq(1+o(1))\frac{\varphi(d_L)}{d_L}\frac{|C|}{|G|}\frac{\#\mathfrak{A}(x)}{\log x}
\]
for all sufficiently large $x$, where the implied constant in $1+o(1)$ depends only on $L$.  Theorem \ref{bounded-gaps-short} now follows directly from Theorem \ref{big-maynard-thm}.
\end{proof}

\bibliographystyle{abbrv}
\bibliography{JAThorner_BV_Short_Intervals}

\end{document}